\theoremstyle{plain}
\newtheorem{thrm}{Theorem}[section]
\newtheorem{lemma}[thrm]{Lemma}
\numberwithin{equation}{section}
\begin{document}

\title[Weingarten Equations and $\sigma_k$-Equations]
{Smooth Local Solutions\\ to Weingarten Equations and $\sigma_k$-Equations}
\author[Chen]{Tiancong Chen}
\address{Department of Mathematics\\
University of California\\
Santa Barbara, CA 93106} \email{tchen6@math.ucsb.edu}
\author[Han]{Qing Han}
\address{Department of Mathematics\\
University of Notre Dame\\
Notre Dame, IN 46556} \email{qhan@nd.edu}
\address{Beijing International Center for Mathematical Research\\
Peking University\\
Beijing, 100871, China} \email{qhan@math.pku.edu.cn}
\thanks{The second author acknowledges the support of the NSF
Grant DMS-1404596}
\maketitle
\begin{abstract}
In this paper, we study the existence of smooth local solutions 
to Weingarten equations and $\sigma_k$-equations. We will prove that, for $2\le k\le n$, 
the Weingarten equations and the $\sigma_k$-equations 
always have smooth local solutions regardless of 
the sign of the functions in the right-hand side of the equations. 
We will demonstrate that the associate linearized equations are uniformly elliptic 
if we choose the initial approximate solutions appropriately. 
\end{abstract}
\keywords{}
\subjclass{}

\section{Introduction}

Weingarten hypersurfaces are the hypersurfaces whose
principal curvatures satisfy some algebraic equations. Specifically,
let $\Omega$ be a domain in $\mathbb R^n$ and $u$ be a function defined
in $\Omega$. Suppose $\kappa_1, \cdots, \kappa_n$ are the principal curvatures of
the graph $(x, u(x))$. Then the general
Weingarten hypersurfaces are given by
\begin{equation}\label{eq-Weingarten}f(\kappa_1, \cdots, \kappa_n)=\psi(x),
\end{equation}
where $f:\mathbb R^n\to \mathbb R$ is a given function. If
$f(\kappa_1, \cdots, \kappa_n)=\kappa_1+ \cdots+ \kappa_n$, then
\eqref{eq-Weingarten} reduces to the prescribed mean curvature equation
\begin{equation}\label{eq-PrescribedMean}\Delta u-\frac{\partial_iu\partial_ju\partial_{ij}u}{1+|\nabla u|^2}
=\psi(x)\sqrt{1+|\nabla u|^2},\end{equation}
where $\psi(x)$ is the mean curvature of the graph $(x, u(x))$, 
usually denoted by $H$.
If $f(\kappa_1, \cdots, \kappa_n)=\kappa_1\cdot \cdots\cdot \kappa_n$, then
\eqref{eq-Weingarten} reduces to the  prescribed
Gauss curvature equation
\begin{equation}\label{eq-PrescribedGauss}\det(\nabla^2u)=
\psi(x)(1+|\nabla u|^2)^{\frac{n+2}2},
\end{equation}
where $\psi(x)$ is the Gauss curvature of the graph $(x, u(x))$, 
usually denoted by $K$.
It is well known that \eqref{eq-PrescribedMean} is always elliptic regardless
the sign of $\psi$ and that \eqref{eq-PrescribedGauss}
is elliptic if $\nabla^2u$ is positive definite and hence $\psi$ is positive.

Caffarelli, Nirenberg and Spruck \cite{Caf-Nir-Spr1988} studied
the Dirichlet problem for \eqref{eq-Weingarten} for a class of functions
$f$ and positive $\psi$ in strictly convex domains. The corresponding
equation is elliptic. The function $f$ in \cite{Caf-Nir-Spr1988} includes
as the special cases the $k$-th elementary symmetric functions $\sigma_k$.
We will refer the corresponding equation
\begin{equation}\label{eq-PrescribedSigma_k}
\sigma_k(\kappa_1, \cdots, \kappa_n)=\psi(x)
\end{equation}
as the prescribed $\sigma_k$-curvature equation.
We note that $k=1$ corresponds to the prescribed mean curvature equation
and that $k=n$ corresponds to the prescribed Gauss curvature equation. 
Refer to \cite{Wang2009} for the elliptic $\sigma_k$-equations and 
\cite{STW2012} for elliptic Weingarten equations. 

In this paper, we study the local solutions of
the prescribed $\sigma_k$-curvature equation \eqref{eq-PrescribedSigma_k}
and $\sigma_k$-equations.

For $n=k=2$, an equation similar to \eqref{eq-PrescribedGauss}
appears in the form of the Darboux equation
\begin{equation}
\det(\nabla^2_{g}u)=K\det(g_{ij})(1-|\nabla_gu|^2), \label{eq-Darboux}
\end{equation}
where $g$ is a smooth 2-dimensional Riemannian metric. Darboux
showed that $g$ admits a
smooth isometric embedding in $\mathbb{R}^3$ if and only if
\eqref{eq-Darboux}
admits a smooth solution $u$ with $|\nabla_gu|<1$. The equation \eqref{eq-Darboux}
is elliptic if $K$ is positive and hyperbolic if $K$ is negative.
In \cite{Lin1985} and \cite{Lin1986}, Lin proved the
existence of the sufficiently smooth isometric embedding for the
following two cases: $K(0)=0$ and $K$ nonnegative in a
neighborhood of $0\in\mathbb R^2$, or $K(0)=0$
and $dK(0)\ne 0$.
Han, Hong and Lin in \cite{HHL2003} proved the
sufficiently smooth isometric embedding if $K$ is nonpositive and
satisfies some nondegeneracy condition.
In \cite{Han2005}, Han gave an alternative proof of the result by
Lin \cite{Lin1986}.

For the general dimension, the $\sigma_n$-equation
\eqref{eq-PrescribedGauss}
exhibits a similar property as \eqref{eq-Darboux}.
Locally, the equation \eqref{eq-PrescribedGauss} can be viewed as an elliptic
equation if $\psi$ is positive and hyperbolic if $\psi$ is negative.
If $\psi(0)\neq0$, we consider a solution $u$ to \eqref{eq-PrescribedGauss} of
the following form
$$u(x)=\frac12\sum_{i=1}^{n-1}x_i^2+\frac12\operatorname{sign}\psi(0)x_n^2+w(x). $$
The linearized equation for small $w$ is a
perturbation of
$$\partial_{nn}\rho+\psi(0)\sum_{i=1}^{n-1}\partial_{ii}\rho=f.$$
This is elliptic if $\psi(0)>0$ and hyperbolic if $\psi(0)<0$. Hence we
can prove the existence of a solution to \eqref{eq-PrescribedGauss} in a
neighborhood of the origin if $\psi(0)\neq0$. If $\psi(0)=0$, the
situation is quite complicated.

Hong and Zuily in \cite{Hong-Zuily1987} 
considered \eqref{eq-PrescribedGauss} if $\psi\ge
0$. Following \cite{Lin1985}, they considered
\begin{equation}\label{1.5}u(x)=\frac12\sum_{i=1}^{n-1}x_i^2+w(x).
\end{equation} They showed that,
by adding some appropriate terms, the modified linearized
equations can be made degenerately elliptic. Based on this, they
were able to prove the existence of sufficiently smooth solutions 
\eqref{eq-PrescribedGauss}
in the general case $\psi\ge 0$ and the existence of smooth solutions
if, in addition,  $\psi$ does not vanish to infinite order or the
zero set of $\psi$ has a simple structure.

Han in \cite{Han2007} discussed \eqref{eq-PrescribedGauss} 
if $\psi$ changes sign.
He proved the existence
of sufficiently smooth solutions if $K$ changes sign cleanly, i.e.,
$\psi(0)=0$ and $\nabla \psi(0)\neq 0$. In this case, the modified linearized
equations are of the Tricomi type, elliptic in one side of the
hypersurface and hyperbolic in another side.

These results clearly demonstrate how $\psi$ determines the type of the
$\sigma_n$-equation \eqref{eq-PrescribedGauss}.
It is reasonable to expect a similar
pattern for the general $\sigma_k$-equation \eqref{eq-PrescribedSigma_k},
for $2\le k\le n-1$.
However, this is not the case. In order to find a local solution of
\eqref{eq-PrescribedSigma_k}, we can always rewrite it as a perturbation of
a linear {\it elliptic} equation if we choose the initial approximation
appropriately, regardless of the sign of $\psi$ in
\eqref{eq-PrescribedSigma_k}. Therefore, we can always find a local solution
of \eqref{eq-PrescribedSigma_k}, with no extra assumptions on $\psi$.

The main result in this paper is the following theorem.

\begin{thrm}
\label{main} Let $2\le k\le n-1$ and $\psi$ be a $C^{\infty}$-function
in a neighborhood of $0\in\mathbb R^n$. Then
\eqref{eq-PrescribedSigma_k} admits a
$C^{\infty}$-solution in some neighborhood of
$0\in\mathbb R^n$.
\end{thrm}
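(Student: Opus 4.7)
The plan is to choose a quadratic initial approximation $u_0$ whose Hessian eigenvalues lie in the ellipticity cone of $\sigma_k$ while simultaneously realizing $\sigma_k = \psi(0)$, and then to solve the resulting perturbation equation via the implicit function theorem. The fact that underpins the whole argument, special to the range $2 \le k \le n-1$, is that this cone meets every level set $\{\sigma_k = c\}$ for $c \in \mathbb{R}$, in contrast to the Gauss curvature case $k = n$ where only $\sigma_n > 0$ is accessible.

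Concretely, I would set $u_0(x) = \tfrac{t}{2}\sum_{i=1}^{n-1} x_i^2 + \tfrac{\mu}{2}x_n^2$ with parameters $t > 0$ and $\mu > -t(n-k)/(k-1)$. Since $\nabla u_0(0) = 0$, the principal curvatures at the origin equal the eigenvalues $a = (t,\dots,t,\mu)$ of $\nabla^2 u_0$. A direct computation shows $\partial\sigma_k/\partial\lambda_n(a) = t^{k-1}\binom{n-1}{k-1} > 0$ and $\partial\sigma_k/\partial\lambda_i(a) = t^{k-2}\bigl[t\binom{n-2}{k-1} + \mu\binom{n-2}{k-2}\bigr] > 0$ for $i < n$, the latter being equivalent to the stated constraint on $\mu$ via $\binom{n-2}{k-1}/\binom{n-2}{k-2} = (n-k)/(k-1)$ (a ratio which is nondegenerate exactly for $k \le n-1$). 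Meanwhile
\[
\sigma_k(a) = t^k\binom{n-1}{k} + \mu\, t^{k-1}\binom{n-1}{k-1},
\]
and for $t$ sufficiently large the choice $\mu = -t(n-k)/k + \psi(0)/(t^{k-1}\binom{n-1}{k-1})$ stays strictly above the admissible threshold $-t(n-k)/(k-1)$ and enforces $\sigma_k(a) = \psi(0)$. Hence any real value of $\psi(0)$ is matched while keeping the linearization uniformly elliptic.

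Next, I would write $u = u_0 + w$, reducing the equation to $F(w) = g$ where $F(w) := \sigma_k(\kappa[u_0+w]) - \psi(0)$ and $g := \psi - \psi(0)$; note that $F(0) = 0$ and $g(0) = 0$. The Fr\'echet derivative $L := DF(0)$ is a second-order quasilinear operator with smooth coefficients, whose value at $x = 0$ is $\sum_i (\partial\sigma_k/\partial\lambda_i)(a)\,\partial_{ii}$ in the eigenbasis of $\nabla^2 u_0$, so $L$ is uniformly elliptic on a small ball $B_r$. Viewing $F$ as a map $C^{2,\alpha}_0(\overline{B_r}) \to C^{\alpha}(\overline{B_r})$ with zero boundary data, standard Schauder theory gives $L$ a bounded Dirichlet inverse, and $\|g\|_{C^{\alpha}(\overline{B_r})} \to 0$ as $r \to 0$ since $g(0) = 0$. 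The implicit function theorem then produces $w \in C^{2,\alpha}_0(\overline{B_r})$ solving $F(w) = g$ for all sufficiently small $r > 0$, and interior elliptic bootstrap upgrades $w$ to $C^\infty(B_r)$, yielding the desired smooth local solution $u = u_0 + w$.

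The main obstacle I foresee is the first step: realizing simultaneously the ellipticity of the linearization and an arbitrary value of $\psi(0)$ within the Hessian of a single quadratic $u_0$. This is precisely the feature that distinguishes $2 \le k \le n-1$ from the Gauss-curvature case $k = n$, where $\binom{n-2}{k-1}=0$ forces $\mu > 0$ and hence $\sigma_n > 0$; that constraint is exactly why the earlier works on $k = n$ had to develop degenerate-elliptic, Tricomi-type, or mixed-type techniques. Once $u_0$ is chosen as above, the remainder is standard elliptic perturbation theory in H\"older spaces, with no need for Nash--Moser machinery.
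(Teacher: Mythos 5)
Your choice of initial quadratic $u_0 = \tfrac{t}{2}\sum_{i<n}x_i^2 + \tfrac{\mu}{2}x_n^2$ with the stated $\mu$ is literally the construction of the paper's Lemma~\ref{lemma-PositiveCoefficients} (with $N=t$, $M=\psi(0)$; note $\partial\sigma_k/\partial\lambda_i(a)=\sigma_{k-1}(a|i)$), so the algebraic heart of the argument is identical, and your binomial computations check out. The overall strategy---perturbation off an elliptic quadratic model via the implicit function theorem---is also the paper's.

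The one place where your write-up has a genuine gap is the final step, where you apply the implicit function theorem directly on $C^{2,\alpha}_0(\overline{B_r})\to C^\alpha(\overline{B_r})$ and assert that $\|g\|_{C^\alpha(\overline{B_r})}\to 0$ as $r\to 0$ is enough. The threshold of smallness in the IFT depends on $r$ through $\|L^{-1}\|$, and a scaling computation shows that for the Dirichlet problem on $B_r$ one only gets $\|v\|_{C^{2,\alpha}(\overline{B_r})}\le C\,r^{-\alpha}\|Lv\|_{C^\alpha(\overline{B_r})}$ (the H\"older seminorm of $D^2v$ is not scale-invariant), so $\|L^{-1}\|$ blows up like $r^{-\alpha}$. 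Meanwhile $\|g\|_{C^\alpha(\overline{B_r})}\lesssim r^{1-\alpha}$. Chasing the contraction constants shows the direct argument does close, but only if $\alpha$ is taken small enough (roughly $\alpha\le 1/3$); you neither track this nor fix $\alpha$ small. The paper sidesteps this entirely by introducing the explicit rescaling $\tilde x=\varepsilon^2 x$, $u=u_0+\varepsilon^5 w(\tilde x/\varepsilon^2)$, and dividing by $\varepsilon$, which turns the problem into a one-parameter family $\mathcal F(w,\varepsilon)=0$ on the \emph{fixed} ball $B_1$, with $D_w\mathcal F(0,0)$ a constant-coefficient elliptic operator; the IFT is then applied once in fixed spaces and the $r$-dependence issue never arises. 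I would recommend either adopting this rescaling or explicitly restricting $\alpha$ and carrying the powers of $r$ through the contraction estimate.
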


A similar result holds for $\sigma_k$-equations. 

Now we describe the method of proof. We consider a solution $u$
of the form
$$u(x)=\frac12\sum_{i=1}^n\mu_ix_i^2+w(x).$$
In order to have an initial approximation to an actual solution,
it is reasonable to require 
$$\sigma_k(\mu_1, \cdots, \mu_n)=\psi(0).$$
If $k=n$, this reduces to
$$\prod_{i=1}^n\mu_i=\psi(0).$$
If $\psi(0)=0$, one of $\mu_i$ has to be zero. This is the reason
that $x_n^2$ is missing in \eqref{1.5}. However, for $2\le k\le n-1$,
we can choose nonzero $\mu_i$ for all $i$ and also require the
linearized differential equation to be elliptic. In this way, the sign of
$\psi$ has no effect on the type of the equation \eqref{eq-PrescribedSigma_k}.

The paper consists of three sections including the introduction. In
Section \ref{sec-AlgebraicIneq}, we prove
some algebraic inequalities concerning the $\sigma_k$-function.
These inequalities play an important role in the proof of
Theorem \ref{main}. In Section \ref{sec-Proof}, we
use the implicit function theorem to prove Theorem \ref{main}.

\section{Algebraic Inequalities}\label{sec-AlgebraicIneq}

In the following, we set
$$\sigma_k(\mu)=\sigma_k(\mu_1, \cdots, \mu_n)
=\sum_{i_1<\cdots<i_k}\mu_{i_1}\cdots\mu_{i_k}.$$
We also set, for any $i=1, \cdots, n$,
$$\sigma_{k-1}(\mu|i)=\sigma_{k-1}(\mu_1, \cdots, \widehat \mu_i,\cdots, \mu_n),$$
where $\widehat \mu_i$ means that $\mu_i$ is deleted.

For an $n\times n$ symmetric matrix $A$, we let $\lambda(A)$ be the collection of
eigenvalues of $A$ and treat it as a vector in $\mathbb R^n$. We also write
$$\sigma_k(A)=\sigma_k\big(\lambda(A)\big).$$

\begin{lemma}\label{lemma-Expansion} Let $1\le k\le n$, $B=(b_{ij})$
be an $n\times n$ symmetric matrix
and $D=\operatorname{diag}(\mu_1, \cdots, \mu_n)$ be a diagonal
matrix, for some $\mu=(\mu_1, \cdots, \mu_n)\in\mathbb R^n$. Then,
$$\sigma_k(D+B)=\sigma_k(\mu)+\sum_{i=1}^n\sigma_{k-1}(\mu|i)b_{ii}
+O(|B|^2).$$
\end{lemma}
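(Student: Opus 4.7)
The plan is to exploit the standard identity expressing $\sigma_k(A)$ as the sum of the $k\times k$ principal minors of $A$, and then Taylor-expand each such minor to first order in $B$.

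Concretely, I would write
\[
\sigma_k(D+B)=\sum_{|I|=k}\det\bigl((D+B)_I\bigr),
\]
where the sum is over $k$-subsets $I\subset\{1,\ldots,n\}$ and $(\cdot)_I$ denotes the principal submatrix indexed by $I$. Each summand is a polynomial in the entries of $B_I$, so Taylor's theorem gives
\[
\det(D_I+B_I)=\det(D_I)+\sum_{i,j\in I}C_{ij}(D_I)\,b_{ij}+O(|B_I|^2),
\]
with $C_{ij}(D_I)$ the $(i,j)$-cofactor of $D_I$. Here $D_I$ is diagonal, so I would verify the key observation that $C_{ij}(D_I)=0$ whenever $i\ne j$ (deleting row $i$ and column $j$ of a diagonal matrix leaves the row corresponding to index $j$ with all entries equal to zero), while $C_{ii}(D_I)=\prod_{l\in I,\,l\ne i}\mu_l$.

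Substituting these cofactors yields
\[
\det(D_I+B_I)=\prod_{i\in I}\mu_i+\sum_{i\in I}b_{ii}\prod_{\substack{l\in I\\ l\ne i}}\mu_l+O(|B|^2).
\]
Summing over $I$ and interchanging the order of summation to group by the fixed diagonal index $i$ gives
\[
\sum_{|I|=k}\sum_{i\in I}b_{ii}\prod_{\substack{l\in I\\ l\ne i}}\mu_l
=\sum_{i=1}^n b_{ii}\sum_{\substack{|I|=k\\ I\ni i}}\prod_{\substack{l\in I\\ l\ne i}}\mu_l
=\sum_{i=1}^n b_{ii}\,\sigma_{k-1}(\mu|i),
\]
since the inner sum is exactly the expansion of $\sigma_{k-1}$ in the $n-1$ variables $\mu_l$ with $l\ne i$. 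Combined with $\sum_{|I|=k}\prod_{i\in I}\mu_i=\sigma_k(\mu)$, this is the claimed formula.

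There is no genuine obstacle here; the only point that requires a line of justification is the vanishing of off-diagonal cofactors of a diagonal matrix, which guarantees that the off-diagonal entries of $B$ contribute only to the $O(|B|^2)$ term. The uniformity of the error in $B$ is immediate because each $\det((D+B)_I)$ is a polynomial of degree $k$ in the entries of $B$, so the Taylor remainder is bounded by a constant (depending only on $k,n,\mu$) times $|B|^2$.
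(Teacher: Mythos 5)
Your proof is correct, but it takes a genuinely different route from the paper's. You start from the principal-minors identity $\sigma_k(A)=\sum_{|I|=k}\det(A_I)$, Taylor-expand each $k\times k$ minor around $D_I$ via cofactors, observe that the off-diagonal cofactors of a diagonal matrix vanish (so only $b_{ii}$ contributes at first order), and re-sum over $I$ to assemble $\sigma_{k-1}(\mu|i)$. The paper instead works through the characteristic polynomial: it reads $\sigma_k(D+B)$ off as the coefficient of $\lambda^{n-k}$ in $\det(\lambda I+D+B)$, and isolates the coefficient of each $b_{ij}$ by the device of setting all other entries of $B$ to zero; for $i\ne j$ the off-diagonal entry then enters only through $b_{ij}^2$, which kills the linear term, and for $i=j$ the factor $(\lambda+\mu_i+b_{ii})$ produces $\sigma_{k-1}(\mu|i)$ directly. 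Your approach is more elementary and self-contained, making the role of the diagonal structure transparent at the level of cofactors; the paper's characteristic-polynomial device is perhaps less direct but sets up a systematic framework that the authors reuse immediately afterward to read off the quadratic term $\sum_{i<j}\sigma_{k-2}(\mu|i,j)\det\bigl(\begin{smallmatrix}b_{ii}&b_{ij}\\b_{ji}&b_{jj}\end{smallmatrix}\bigr)$. Both arguments are sound; neither has a gap.
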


\begin{proof} Note that $\sigma_k(D+B)$ is a polynomial in $\mu$ of degree $k$.
The homogeneous part of degree $k$ is obviously $\sigma_k(\mu)$. We
only need to find the homogeneous part of degree $k-1$.
Recall
\begin{align*}\det(\lambda I+D+B)=\sum_{i=0}^n\sigma_{i}(D+B)\lambda^{n-i}
&=\lambda^n
+\sigma_{1}(D+B)\lambda^{n-1}+\cdots\\
&\quad +
\sigma_{k}(D+B)\lambda^{n-k}+\cdots+\sigma_{n}(D+B).\end{align*}
To find the homogeneous part in $\mu$ of degree $k-1$ in $\sigma_k(D+B)$, we
need only identify in $\det(\lambda I+D+B)$ the following term
$$\det(\lambda I+D+B)=\cdots+\sum_{p,q=1}^nc_{pq}(\mu)b_{pq}\lambda^{n-k}
+\cdots,$$
where $c_{pq}(\mu)$ is a homogeneous polynomial in $\mu$ of degree $k-1$. The
easiest way to do this is to keep $b_{ij}$ for fixed $(i,j)$ and let all
other $b_{pq}$ be zero in $\det(\lambda I+D+B)$. 
Then we expand $\det(\lambda I+D+B)$ as a polynomial
of $\lambda$ and $b_{ij}$, and identify the corresponding coefficient
of $\lambda^{n-k}$ and $b_{ij}$.
For $(i,j)$ with $i=j$, by letting all other $b_{pq}$ be zero, we have
\begin{align*}
\det(\lambda I+D+B)&=(\lambda+\mu_i+b_{ii})(\lambda+\mu_i)^{-1}
\prod_{l=1}^{n}(\lambda+\mu_l)\\
&=\cdots+\sigma_{k-1}(\mu|i)b_{ii}\lambda^{n-k}+\cdots.\end{align*}
This implies $c_{ii}(\mu)=\sigma_{k-1}(\mu|i)$. Similarly, for
$(i,j)$ with $i\neq j$, by letting all other $b_{pq}$ be zero, we have
$$
\det(\lambda I+D+B)=\big[(\lambda+\mu_i)(\lambda+\mu_j)-b_{ij}^2\big]
(\lambda+\mu_i)^{-1}(\lambda+\mu_j)^{-1}
\prod_{l=1}^{n}(\lambda+\mu_l).$$
This implies $c_{ij}=0$ for $i\neq j$.
\end{proof}

By a similar method as in the proof, we can in fact identify all terms in
$\sigma_k(D+B)$. For example, the homogeneous part in $\mu$
of degree $k-2$ is given by
$$\sum_{i<j}\sigma_{k-2}(\mu|i,j)\det\left(\begin{matrix}b_{ii}&b_{ij}\\
b_{ji}&b_{jj}\end{matrix}\right).$$

We will use Lemma \ref{lemma-Expansion} in the following way. For a $C^2$-function
$w=w(x)$, we set
$$u(x)=\frac12\sum_{i=1}^n\mu_ix_i^2+w(x).$$
We will choose constants $\mu_1, \cdots, \mu_n$ 
and a sufficiently small function $w$ appropriately such that $u$ defined above is the desired solution. 
To this end, we need to analyze the linearization of $\sigma_k(\nabla^2u)$ with respect to $w$
at $w=0$. 
Note $$\nabla^2u=D+\nabla^2w,$$
where $D=\operatorname{diag}(\mu_1, \cdots, \mu_n)$. 
Hence,  the linearization of $\sigma_k(\nabla^2u)$ with respect to $w$
at $w=0$ is given by
$$Lv=\sum_{i=1}^n\sigma_{k-1}(\mu|i)\partial_{ii}v.$$
We now demonstrate that we can always make this operator elliptic
by choosing $\mu_1, \cdots, \mu_n$ appropriately.

\begin{lemma}\label{lemma-PositiveCoefficients}
Let $2\le k\le n-1$. For any constant $M\in \mathbb R$, there exists a vector $\mu
=(\mu_1, \cdots, \mu_n)\in \mathbb R^n$ such that
\begin{equation}\label{eq-AlgebraicSigma_k}\sigma_k(\mu)=M\end{equation}
and
\begin{equation}\label{eq-PositiveSigma_k-1}
\sigma_{k-1}(\mu|i)>0\quad\text{for any }i=1,\cdots, n.\end{equation}
\end{lemma}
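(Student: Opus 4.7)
The plan is to build $\mu$ explicitly via a two-parameter ansatz of the form $\mu=(s,\ldots,s,-t)\in\mathbb R^n$, with $n-1$ copies of $s>0$ in the first $n-1$ slots and $-t$ in the last slot, then solve $\sigma_k(\mu)=M$ for $t$ as a function of $s$, and finally show that once $s$ is taken sufficiently large, the remaining positivity conditions \eqref{eq-PositiveSigma_k-1} hold automatically.

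First I would exploit the symmetry of the ansatz to record, by elementary binomial counting,
$$\sigma_k(\mu)=\binom{n-1}{k}s^k-\binom{n-1}{k-1}s^{k-1}t,\qquad \sigma_{k-1}(\mu|n)=\binom{n-1}{k-1}s^{k-1},$$
and for $i<n$,
$$\sigma_{k-1}(\mu|i)=\binom{n-2}{k-1}s^{k-1}-\binom{n-2}{k-2}s^{k-2}t.$$
The hypothesis $2\le k\le n-1$ is precisely what forces all four binomial coefficients appearing here to be strictly positive, so that $t$ enters as a genuine linear parameter in $\sigma_k(\mu)$ and in the $\sigma_{k-1}(\mu|i)$ for $i<n$. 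Solving $\sigma_k(\mu)=M$ then gives the closed-form expression $t=\tfrac{n-k}{k}s-\tfrac{M}{\binom{n-1}{k-1}s^{k-1}}$. The condition $\sigma_{k-1}(\mu|n)>0$ is automatic from $s>0$, and the conditions $\sigma_{k-1}(\mu|i)>0$ for $i<n$ reduce, after dividing by $s^{k-2}>0$, to the single scalar inequality $t<\tfrac{n-k}{k-1}s$. Substituting the formula for $t$, this collapses to a polynomial inequality of the form $s^k>C_{n,k}\max(-M,0)$ for an explicit positive constant $C_{n,k}$, which is satisfied for every $s>0$ when $M\ge 0$ and for all sufficiently large $s$ when $M<0$.

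The main obstacle is conceptual rather than computational: one must identify an ansatz flexible enough to hit an arbitrary $M\in\mathbb R$ while still meeting $n$ strict positivity constraints. The degenerate case $k=n$ is instructive here, since then the requirement that all $\sigma_{n-1}(\mu|i)=\prod_{j\ne i}\mu_j$ be positive forces the $\mu_j$ to share a common sign, which pins down the sign of $\sigma_n(\mu)=\prod_j\mu_j$, so the analogue of the lemma simply fails. In the construction above the full hypothesis $2\le k\le n-1$ is used exactly to keep the ratio $\binom{n-2}{k-1}/\binom{n-2}{k-2}=(n-k)/(k-1)$ strictly positive, which opens up the one-sided window of $t$ through which $\sigma_k(\mu)$ is able to sweep out all of $\mathbb R$ as $s$ is scaled up.
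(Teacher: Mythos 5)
Your proof is correct, and the computations check out: with $\mu=(s,\ldots,s,-t)$ one gets $\sigma_k(\mu)=\binom{n-1}{k}s^k-\binom{n-1}{k-1}s^{k-1}t$, the condition $\sigma_{k-1}(\mu|i)>0$ for $i<n$ becomes $t<\tfrac{n-k}{k-1}s$, and substituting $t=\tfrac{n-k}{k}s-\tfrac{M}{\binom{n-1}{k-1}s^{k-1}}$ reduces this to $s^k>\tfrac{k(k-1)}{(n-k)\binom{n-1}{k-1}}\max(-M,0)$, attainable for large $s$. Your observation that $2\le k\le n-1$ is exactly what keeps $\binom{n-2}{k-2}$ and $\binom{n-2}{k-1}$ strictly positive, and your remark about the failure at $k=n$, are both on point.

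Your ansatz is the same as the paper's: both take $\mu_1=\cdots=\mu_{n-1}$ equal and let the constraint $\sigma_k(\mu)=M$ determine $\mu_n$. The difference is in how the positivity of $\sigma_{k-1}(\mu|i)$ is then verified. The paper works with a general $\mu$ for as long as possible, eliminating $\mu_n$ via the constraint to express $\sigma_{k-1}(\mu|i)$ in terms of $\sigma_{k-1}(\mu|i,n)$, $\sigma_{k-2}(\mu|i,n)$, $\sigma_k(\mu|i,n)$, and then invokes Newton's inequality on $\mathbb R^{n-2}$ to get a lower bound with a strictly positive leading coefficient; only at the very end does it specialize to the symmetric point. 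You instead specialize immediately and replace Newton's inequality by explicit binomial counting, which turns the whole verification into elementary algebra and exhibits the threshold constant $C_{n,k}$ explicitly. Your route is cleaner and more self-contained; the paper's route is slightly more structural and would survive perturbations of the ansatz away from the fully symmetric choice, since the Newton-inequality estimate holds for arbitrary $\mu$ with $\sigma_{k-1}(\mu|n)>0$.
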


\begin{proof} We write \eqref{eq-AlgebraicSigma_k} as
$$\sigma_{k}(\mu|n)+\sigma_{k-1}(\mu|n)\mu_{n}=M.$$
We assume
$$\sigma_{k-1}(\mu|n)>0.$$
Then we have
\begin{equation}\label{eq-11}
\mu_{n}=\frac{1}{\sigma_{k-1}(\mu|n)}\big[M-\sigma_{k}(\mu|n)\big].\end{equation}
Next, we fix an $i=1, \cdots, n-1$ and write
$$\sigma_{k-1}(\mu|i)=\sigma_{k-1}(\mu|i,n)+\sigma_{k-2}(\mu|i,n)\mu_n,$$
where, for $l=k-1, k-2$,
$$\sigma_{l}(\mu|i,n)=\sigma_{l}(\mu_1, \cdots, \widehat \mu_i, \cdots,
\mu_{n-1}, \widehat \mu_n).$$
Hence, \begin{align*}
\sigma_{k-1}(\mu|i)&=\sigma_{k-1}(\mu|i,n)+
\frac{\sigma_{k-2}(\mu|i,n)}{\sigma_{k-1}(\mu|n)}\big[M-\sigma_{k}(\mu|n)\big] \\
&=\frac{1}{\sigma_{k-1}(\mu|n)}\big[\sigma_{k-1}(\mu|n)\sigma_{k-1}(\mu|i,n)
-\sigma_{k}(\mu|n)\sigma_{k-2}(\mu|i,n)+\sigma_{k-2}(\mu|i,n)M\big]\\
&=\frac{1}{\sigma_{k-1}(\mu|n)}\bigg\{\sigma_{k-1}(\mu|i,n)
\big[\sigma_{k-1}(\mu|i,n)+\sigma_{k-2}(\mu|i,n)\mu_i\big]\\
&\qquad\qquad\qquad-\sigma_{k-2}(\mu|i,n)
\big[\sigma_{k}(\mu|i,n)+\sigma_{k-1}(\mu|i,n)\mu_i\big]+\sigma_{k-2}(\mu|i,n)M
\bigg\}\\
&=\frac{1}{\sigma_{k-1}(\mu|n)}\bigg\{\big[\sigma_{k-1}(\mu|i,n)\big]^2
-\sigma_{k-2}(\mu|i,n)
\sigma_{k}(\mu|i,n)+\sigma_{k-2}(\mu|i,n)M
\bigg\}. \end{align*}
By Newton's inequality (in $\mathbb R^{n-2}$), we have
$$\sigma_{k-2}(\mu|i,n)
\sigma_{k}(\mu|i,n)\le \frac{(k-1)(n-k-1)}{k(n-k)}
\big[\sigma_{k-1}(\mu|i,n)\big]^2,$$
and hence
\begin{align}\label{eq-12}\begin{split}
\sigma_{k-1}(\mu|i)\ge \frac{1}{\sigma_{k-1}(\mu|n)}
\bigg\{\left[1-\frac{(k-1)(n-k-1)}{k(n-k)}\right]
\big[\sigma_{k-1}(\mu|i,n)\big]^2&\\
+\sigma_{k-2}(\mu|i,n)M&\bigg\}.\end{split}\end{align}
In the following, we set
$$1_{\mathbb R^n}=(1,\cdots,1)\in\mathbb R^n.$$
Take $\mu_1=\cdots=\mu_{n-1}=N$, with $N$ to be determined. Then,
$$\sigma_{k}(\mu|n)=\sigma_{k}(1_{\mathbb R^{n-1}})N^{k},$$
and
$$\sigma_{k-1}(\mu|n)=\sigma_{k-1}(1_{\mathbb R^{n-1}})N^{k-1}>0.$$
By \eqref{eq-11}, we take
$$\mu_{n}=\frac{1}{\sigma_{k-1}(1_{\mathbb R^{n-1}})N^{k-1}}
\big[M-\sigma_{k}(1_{\mathbb R^{n-1}})N^{k}\big].$$
Then by \eqref{eq-12}, we have
\begin{align*}
\sigma_{k-1}(\mu|i)\ge \frac{1}{\sigma_{k-1}(1_{\mathbb R^{n-1}})N^{k-1}}
\bigg\{\left[1-\frac{(k-1)(n-k-1)}{k(n-k)}\right]
\cdot\big[\sigma_{k-1}(1_{\mathbb R^{n-2}})N^{k-1}\big]^2&\\
+\sigma_{k-2}(1_{\mathbb R^{n-2}})N^{k-2}M&\bigg\}.\end{align*}
For the given $M$, we can choose $N$ sufficiently large such that
$$\sigma_{k-1}(\mu|i)\ge cN^{k-1}>0,$$
where $c$ is a positive constant depending only on $n$ and $k$. 
This establishes the desired result. \end{proof}

To prove Theorem \ref{main}, we will construct solutions $u$ 
as perturbations of the initial approximation 
\begin{equation}\label{eq-InitialApproximation}u_0=\frac12\sum_{i=1}^{n}
\mu_ix_i^2,\end{equation}
where $\mu_1, \cdots, \mu_n$ are from Lemma \ref{lemma-PositiveCoefficients}. 
Hence, local behaviors of $u$ such as the convexity coincide with those of $u_0$ 
in \eqref{eq-InitialApproximation}, 
if none of $\mu_i$ is zero.  The proof of Lemma \ref{lemma-PositiveCoefficients}
gives one choice of $\mu_1, \cdots, \mu_n$ satisfying 
\eqref{eq-AlgebraicSigma_k} and \eqref{eq-PositiveSigma_k-1}, which may not result in 
solutions $u$ with good geometric properties. In some cases, 
better choices of $\mu_1, \cdots, \mu_n$ are available.  For example, if $M>0$, we can 
choose $\mu_1=\cdots=\mu_n=N$ for some appropriate $N>0$. Then,
the corresponding $u_0$ is convex and so will be the resulting solution $u$.

\section{Proof of the Main Theorem}\label{sec-Proof}

In the present section, we discuss the linearization of the
$\sigma_k$-equation and prove Theorem \ref{main}.

Let $u$ be a $C^2$-function defined in a domain in
$\mathbb R^n$. According to \cite{Caf-Nir-Spr1986},
the principal curvatures $\kappa$ of the graph of $u$ are eigenvalues of
the symmetric matrix
\begin{equation}\label{eq-PrincipalCurvSetup}
a_{ij}=\frac{1}{v}\left(u_{ij}-\frac{u_iu_ku_{ki}}{v(1+v)}
-\frac{u_ju_lu_{lj}}{v(1+v)}+\frac{u_iu_ju_ku_lu_{kl}}{v^2(1+v)^2}\right),
\end{equation}
where $v=(1+|\nabla u|^2)^{1/2}$ and the summation convention was used.

Let $\psi$ be a $C^\infty$-function defined in a neighborhood
of $0\in\mathbb R^n$. By setting $M=\psi(0)$, 
we take $\mu_1, \cdots,\mu_n$ satisfying Lemma 
\ref{lemma-PositiveCoefficients}, i.e.,
\begin{equation}\label{eq-AlgebraicSigma_kSetup}\sigma_k(\mu)=
\psi(0),\end{equation}
and
\begin{equation}\label{eq-PositiveSigma_k-1Setup}
\sigma_{k-1}(\mu|i)>0\quad\text{for any }i=1,\cdots, n.\end{equation}

To proceed, we temporarily replace $x \in \mathbb{R}^n $ by
$\tilde{x}\in \mathbb{R}^n $ and write
$\widetilde{\partial_i}$ instead of $\partial_{\tilde{x}_i}
$. Consider
\begin{equation}\label{2.5}
\tilde {\mathcal{F}}(u)=\sigma_k(\tilde \kappa_1, \cdots,
\tilde\kappa_n)-\psi(\tilde x),
\end{equation}
where $\tilde \kappa=(\tilde \kappa_1, \cdots,
\tilde\kappa_n)$ is the collection of all principal curvatures of
the graph $(\tilde x, u(\tilde x))$.
Now  we set, for $\varepsilon>0$,
$$\tilde{x}=\varepsilon^2 x, $$
and
\begin{equation*}\label{2.6}u(\tilde{x})=\frac12\sum_{i=1}^{n}
\mu_i\tilde x_i^2+\varepsilon^{5}w\left(\frac{\tilde{x}}
{\varepsilon^2} \right).\end{equation*}
We evaluate $\tilde {\mathcal{F}}(u)$ in terms of $w$ by setting
\begin{equation}\label{2.8}
\mathcal{F}(w,
\varepsilon)=\frac{1}{\varepsilon}\tilde
{\mathcal{F}}(u).\end{equation}

We first note that
$$\tilde\partial_iu=\mu_l\tilde x_l\delta_{il}+\varepsilon^3 \partial_iw=
\varepsilon^2\mu_i x_i+\varepsilon^3 \partial_iw,$$
and
$$\tilde\partial_{ij}u=\mu_l\delta_{il}\delta_{jl}+\varepsilon \partial_{ij}w.$$
Then the matrix $A=(a_{ij})$ in \eqref{eq-PrincipalCurvSetup} has the form
$$
a_{ij}=\mu_{i}\delta_{ij}+\varepsilon \partial_{ij}w
+\varepsilon^2b_{ij}(\nabla w, \nabla^2w, \varepsilon),$$
where $b_{ij}$ is a smooth function in its arguments.
By Lemma \ref{lemma-Expansion}, we have
$$\sigma_k(A)=\sigma_k(\mu)+\varepsilon
\sum_{i=1}^n\sigma_{k-1}(\mu|i)\partial_{ii}w
+\varepsilon^2 c(\nabla w, \nabla^2w, \varepsilon),$$
where $c$ is a smooth function in its arguments.
By substituting in \eqref{2.5}, we have 
\begin{equation*}
\tilde {\mathcal{F}}(u)=\varepsilon
\sum_{i=1}^n\sigma_{k-1}(\mu|i)\partial_{ii}w
+\varepsilon^2 c(\nabla w, \nabla^2w, \varepsilon)
-\big(\psi(\varepsilon^2 x)-\psi(0)\big).
\end{equation*} 
By the mean value theorem (applied to $\psi$) and \eqref{2.8}, we obtain 
\begin{equation}\label{2.8a}
\mathcal{F}(w,
\varepsilon)=\sum_{i=1}^n\sigma_{k-1}(\mu|i)\partial_{ii}w
+\varepsilon f(x, \nabla w, \nabla^2w, \varepsilon),\end{equation}
where $f$ is a smooth function in its arguments. Our goal is to solve 
$\mathcal{F}(w,\varepsilon)=0$ for sufficiently small $\varepsilon$. 

\begin{proof}[Proof of Theorem \ref{main}] Let $\mathcal F$ be as given in 
\eqref{2.8a}. Then 
\begin{equation*}
\mathcal{F}(w,
0)=\sum_{i=1}^n\sigma_{k-1}(\mu|i)\partial_{ii}w,\end{equation*}
and hence $\mathcal{F}(0,0)=0$. Let $L$ be the linearized operator of $\mathcal F$ 
with respect to $w$ at $(w,\varepsilon)=(0,0)$. Then 
$$Lv=\sum_{i=1}^n\sigma_{k-1}(\mu|i)\partial_{ii}v.$$ 
By \eqref{eq-PositiveSigma_k-1Setup}, 
$L$ is an elliptic operator of constant coefficients. For fixed
$m\ge 3$ and $\alpha\in (0,1)$, we set 
$$X=\{u\in C^{m,\alpha}(\bar B_1):\, u=0\text{ on }\partial B_1\}, \qquad 
Y=C^{m-2,\alpha}(\bar B_1).$$
We note that 
$$\mathcal F: X\times (-\varepsilon_0, \varepsilon_0)\to Y$$ 
is a well-defined map for sufficiently small $\varepsilon_0$ and that 
$L: X\to Y$ is an isomorphism by the classical Schauder theory. Then by the 
implicit function theorem, for sufficiently small $\varepsilon$, there 
exists a $w=w(\varepsilon)\in X$ such that $\mathcal F(w,\varepsilon)=0$. 
Note that $\mathcal F$ is a fully nonlinear elliptic operator for sufficiently small 
$\varepsilon$. The classical Schauder theory implies that $w\in C^\infty(\bar B_1)$. 
\end{proof}

\end{document}